\newtheorem{theorem}{Theorem}
\newtheorem{corollary}{Corollary}
\newtheorem{identity}{Identity}[theorem]
\newcommand*{\ADRt}{School of Mathematics and Statistics, Wuhan University, Wuhan, China. \texttt{komatsu@whu.edu.cn}} 
\newcommand*{\ADRn}{University of Sopron,  Institute of Mathematics, Hungary. \texttt{nemeth.laszlo@uni-sopron.hu}}
\newcommand*{\ADRs}{{University J.~Selye, Department of Mathematics and Informatics, Slovakia.} \texttt{laszlo.szalay.sopron@ gmail.com}}
\newcommand*{\TIT}{Tilings of hyperbolic $(2\times n)$-board with colored squares and dominoes}
\title{\bf \TIT}
\author{Takao Komatsu\footnote{\ADRt}, L\'aszl\'o N\'emeth\footnote{\ADRn}, L\'aszl\'o Szalay\footnote{\ADRs}}
\date{{\small \today}}
\begin{document}

\maketitle \thispagestyle{empty}

\begin{abstract}
Several articles deal with tilings with squares and dominoes of the well-known regular square mosaic in Euclidean plane, but not any with the hyperbolic regular square mosaics. In this article, we examine the tiling problem with colored squares and dominoes of one type of the possible hyperbolic generalization of $(2\times n)$-board.   \\[1mm]
 {\em Key Words: Tiling, domino, hyperbolic mosaic, Fibonacci numbers, combinatorial identity.}\\
 {\em MSC code:  Primary 05A19; Secondary 05B45, 11B37, 11B39, 52C20.} \\[1mm] 
 The final publication is available at Art Mathematica Contemporanea via http://amc-journal.eu.
\end{abstract}



\section{Introduction}

In the hyperbolic plane there exist infinite types of regular mosaics, they are denoted by Schl\"afli's symbol $\{p,q\}$, where the positive integers $p$ and $q$ have the property $(p-2)(q-2)>4$, see \cite{Cox}. If $p=4$ they are the regular square mosaics and each vertex of the mosaic is surrounded by $q$ squares.  Note that if $p=q=4$ we obtain the Euclidean square mosaic.

Now we define the $(2\times n)$-board on mosaic $\{4,q\}$, where $q\geq4$. First we take a square $S_1$ with vertices $A_0,A_1,B_1, B_0$ according to Figure~\ref{fig:board44}, and later to Figure \ref{fig:board45} and \ref{fig:board4q}. As the second step we consider the square $S_2$, which has a common edge $A_1B_1$ with $S_1$. The two new vertices are $A_2,B_2$. Similarly, we define the squares $S_3,\ldots, S_n$, their newly constructed vertices are $A_i$ and $B_i$ ($3\leq i \leq n$), respectively. The union of $S_i$ ($1\leq i \leq n$) forms the first level of the board. It is depicted with yellow colors in Figures~\ref{fig:board44}-\ref{fig:board4q}. (On the left-hand side of Figure~\ref{fig:board45} the mosaic $\{4,5\}$ and the $(2\times 4)$-board are illustrated in Poincar\'e disk model and on the right-hand side there is a ``schematic" $(2\times 4)$-board from the mosaic.) The second level of the board is formed by the squares of the mosaic having at least one vertex from the set  $\{A_1, A_2, \ldots, A_n\}$ and not from $\{B_1, B_2, \ldots, B_n,A_{n+1}\}$, where the last point is the appropriate point of the virtually joined square $S_{n+1}$ ($A_0$ is not in the first set, see  Figure~\ref{fig:board4q}). These are the light blue squares in the figures. In the first level, independently from $q$ there are $n$ squares, while the second level contains $n(q-3)$ squares (see Figure~\ref{fig:board4q}).  

Let $r_n$ be the number of the different tilings with $(1\times 1)$-squares and $(1\times 2)$-dominoes  (two squares with a common edge) of a $(2\times n)$-board of mosaic $\{4,q\}$. 
It is known that the tilings of a $(1\times n)$-board on the Euclidean square mosaic can be counted by the Fibonacci numbers \cite{BQ,BPS}. In fact, $r_n=f_n$, where $\{f_n\}_{n=0}^\infty$ is the shifted Fibonacci sequence  ($F_n=f_{n-1}$, where $F_n$ is the $n$-th Fibonacci number, A000045 in OEIS \cite{S}), so that 
$$f_{n}=f_{n-1}+f_{n-2}\quad (n\geq 2)$$ 
holds with initial values $f_0=f_1=1$ (and $f_{-1}=0$).

\begin{figure}[!h]
	\centering
	\includegraphics[scale=0.9]{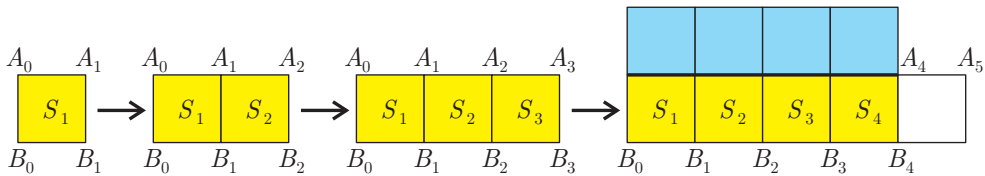}
	\caption{$(2\times 4)$-board on Euclidean mosaic $\{4,4\}$}
	\label{fig:board44}
\end{figure}

\begin{figure}[!h]
	\centering
	\includegraphics[scale=0.9]{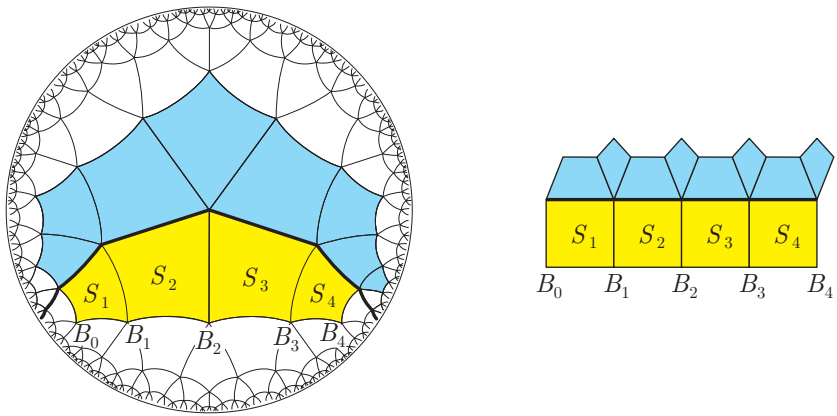}
	\caption{$(2\times 4)$-board on  hyperbolic  mosaic $\{4,5\}$}
	\label{fig:board45}
\end{figure}

\begin{figure}[!h]
	\centering
	\includegraphics[scale=0.9]{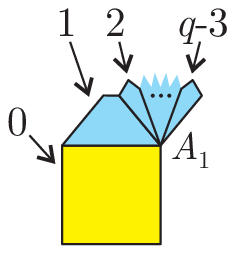}\hspace{1cm}     	\includegraphics[scale=0.9]{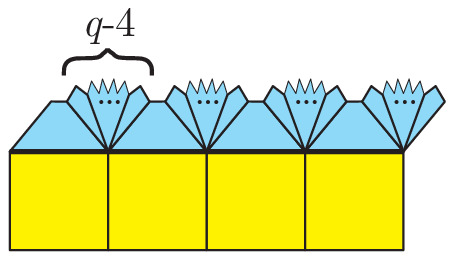}
	\caption{$(2\times 1)$-board and $(2\times 4)$-board on hyperbolic mosaic $\{4,q\}$ ($q\geq 5$)}
	\label{fig:board4q}
\end{figure}

McQuistan and Lichtman \cite{ML} (generalizations in \cite{Kah}) studied the tilings in case of the Euclidean square mosaic $\{4,4\}$  and they proved that $r_n$ satisfies the identity
\begin{equation}\label{eq:til_44}
r_n=3r_{n-1}+r_{n-2}-r_{n-3}
\end{equation}
for $n\geq3$ with initial values $r_0=1$, $r_1=2$ and $r_2=7$ (A030186 in \cite{S}).

In the work \cite{Ben}, the generalized Fibonacci number $u_n$, where
\begin{equation} \label{eq:u}
 u_{n}=a u_{n-1}+b u_{n-2},\quad (n\geq 2)
\end{equation}
 with initial values $u_0=1$, $u_1=a$ (and $u_{-1}=0$),
is interpreted as the number of ways to tile a $(1 \times n)$-board using $a$ colors of squares and $b$ colors of dominoes. Obviously, if $a=b=1$ then $u_n=f_n$. Belbachir and Belkhir proved a couple of general combinatorial identities related to $u_n $  in \cite{Belb}.

Let $R_n$ be the number of tilings of $(2\times n)$-board of mosaic $\{4,q\}$ using $a$ colors of squares and $b$ colors of dominoes.  When $q=4$ Katz and Stenson \cite{KS} showed the recurrence rule  
\begin{equation}\label{eq:tilcol_44}
R_n=(a^2+2b)R_{n-1}+a^2b\,R_{n-2}-b^3R_{n-3},\quad (n\geq3)
\end{equation}
with initial values $R_0=1$, $R_1=a^2+b$ and $R_2=a^4+4a^2b+2b^2$.

In this article, we examine the tilings of $(2\times n)$-board on mosaic $\{4,q\}$ ($q\geq4$) with colored squares and dominoes in a general way  and we obtain the following main theorem.

\begin{theorem}\label{th:maincolor}
Assume $q\geq 4$. The sequence $\{R_n\}_{n=0}^{\infty}$ can be described by  the fourth order linear homogeneous recurrence relation 
\begin{equation}\label{eq:main_color}
R_n= \alpha_q\,R_{n-1} +\beta_q\,R_{n-2}+ \gamma_q\,R_{n-3} -b^{2(q-2)} R_{n-4},\quad(n\geq4)
\end{equation}
where (explicit formulas later)
\begin{eqnarray}
	\alpha_{q+2} &=& a \alpha_{q+1}+b \alpha_{q},\label{eqs:recur_alfa}\\
	\beta_{q+3} &=& (a^2+b)\beta_{q+2}+b(a^2+b)\beta_{q+1}-b^3\beta_{q},\label{eqs:recur_beta}\\
	\gamma_{q+2} &=& -ab\gamma_{q+1}+b^3\gamma_{q} \label{eqs:recur_gamma}
\end{eqnarray}
with initial values 
\begin{eqnarray*}
	& &	\alpha_4=a^2+b,\  \alpha_5=a(a^2+3b),\\
	& &	\beta_4=2b(a^2+b),\  \beta_5=b(a^2+b)(a^2+2b),\  \beta_6=b(a^6+6a^4b+10a^2b^2+2b^3),\\
	& &	\gamma_4=b^2(a^2-b),\  \gamma_5=-ab^3(a^2+b),
\end{eqnarray*}
moreover
$R_0=1$,
$R_1=u_{q-2}$,
$R_2=u_{q-2}^2+abu_{q-4}u_{q-3}+bu_{q-3}^2+b^2u_{q-4}^2$,
$R_3=(u_{q-2}^2+2abu_{q-4}u_{q-3}+2bu_{q-3}^2+2b^2u_{q-4}^2)u_{q-2}
+b^2(u_{q-3}u_{q-4}+(a^2+b)u_{q-4}u_{q-5}
+au_{q-4}^2)u_{q-3}
+ab^3u_{q-4}^2u_{q-5}$.
\end{theorem}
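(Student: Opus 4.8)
The plan is to process a tiling column by column, encode it by a $4\times 4$ transfer matrix $M_q$ over $\mathbb{Q}[a,b]$, and read off \eqref{eq:main_color} from the characteristic polynomial of $M_q$. First I would make the shape of the board explicit. Around a vertex $A_i$ ($1\le i\le n$) the mosaic $\{4,q\}$ has $q$ squares: the two first-level squares $S_i,S_{i+1}$ and a ``fan'' of $q-2$ second-level squares; consecutive fans share exactly the square above the edge $A_iA_{i+1}$, while the fan above $A_nA_{n+1}$ is excluded because it contains $A_{n+1}$, which is why the second level has $n(q-3)$ squares. Tracking these incidences, the board is the disjoint union of $n$ paths (``columns'') $C_i\colon\ S_i=v_0^{(i)}-v_1^{(i)}-\cdots-v_{q-3}^{(i)}$, each a $(1\times(q-2))$-board, where $C_i$ and $C_{i+1}$ are joined by exactly two edges: $v_0^{(i)}\sim v_0^{(i+1)}$ (the shared side $A_iB_i$ of $S_i$ and $S_{i+1}$) and $v_{q-3}^{(i)}\sim v_1^{(i+1)}$. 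For $q=4$ the column is just $v_0-v_1$ and one recovers the ordinary $2\times n$ grid.

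A weighted tiling is a choice of dominoes (each weight $b$) together with squares (weight $a$) on the remaining cells, so $R_n$ is a weighted count of matchings of the board graph. Sweeping $C_1,\dots,C_n$ from left to right, I take the state to be the subset of the two ``interface cells'' $\{v_0^{(i+1)},v_1^{(i+1)}\}$ of $C_{i+1}$ already covered by a domino protruding from $C_i$ --- four states in all --- and let $M_q$ have $(\mathrm{in},\mathrm{out})$-entry equal to the weighted number of tilings of a single column $C_i$ compatible with the prescribed pre-covered cells and the prescribed protruding dominoes. Each such entry is a weighted tiling count of a subpath of $C_i$ on $q-2,\ q-3,\ q-4$ or $q-5$ vertices, hence equals one of $u_{q-2},u_{q-3},u_{q-4},u_{q-5}$ times a monomial in $a,b$; with the convention $u_{-1}=0$ from \eqref{eq:u} these formulas stay correct at $q=4$ (where the two interface cells coincide). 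A short composition argument then gives $R_n=(M_q^{\,n})_{\emptyset,\emptyset}$, so by the Cayley--Hamilton theorem $R_n$ satisfies, for $n\ge 4$, the fourth-order linear recurrence whose characteristic polynomial is $\det(xI-M_q)=x^4-\alpha_q x^3-\beta_q x^2-\gamma_q x+b^{2(q-2)}$; the values $R_0,R_1,R_2,R_3$ drop out by expanding $(M_q^{\,n})_{\emptyset,\emptyset}$ for $n\le 3$.

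It then remains to identify the symmetric functions of $M_q$: $\alpha_q=\operatorname{tr}M_q$, $-\beta_q$ is the sum of the $2\times2$ principal minors, $\gamma_q$ is the sum of the $3\times3$ principal minors, and $b^{2(q-2)}=\det M_q$. A short computation gives $\alpha_q=u_{q-2}+ab\,u_{q-5}$, a linear combination of solutions of $z_{q+2}=az_{q+1}+bz_q$, which is \eqref{eqs:recur_alfa}. Likewise $\beta_q$ turns out to be a linear combination of pairwise products of such solutions, and the sequence of these products satisfies the third-order recurrence with characteristic polynomial $(t-\lambda^2)(t-\mu^2)(t+b)$, where $\lambda,\mu$ are the roots of $t^2-at-b$; expanding this polynomial reproduces exactly \eqref{eqs:recur_beta}. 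For the determinant the $3\times3$ and larger minors cancel, leaving $\det M_q=b^4\big(u_{q-3}u_{q-5}-u_{q-4}^{\,2}\big)^2$, and the Cassini-type identity $u_{m-1}u_{m+1}-u_m^{\,2}=(-1)^{m-1}b^{m}$ (immediate by induction from \eqref{eq:u}) turns this into $b^{2(q-2)}$, the constant term of \eqref{eq:main_color}.

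The delicate point is $\gamma_q$: as a combination of triple products of solutions of $z_{q+2}=az_{q+1}+bz_q$ it a priori satisfies only a fourth-order recurrence, whereas \eqref{eqs:recur_gamma} is of second order. The resolution is that every entry of $M_q$ lies in the two-dimensional span of the sequences $\lambda^q,\mu^q$, so $M_q=\lambda^q A+\mu^q B$ for matrices $A,B$ independent of $q$, and a direct inspection of the entries shows that $A$ and $B$ each have rank at most $2$. Hence every $3\times3$ minor of $A$ and of $B$ vanishes, so $\gamma_q$ has no $\lambda^{3q}$ or $\mu^{3q}$ component and lies in the span of $(\lambda^2\mu)^q=(-b\lambda)^q$ and $(\lambda\mu^2)^q=(-b\mu)^q$; therefore it satisfies the second-order recurrence with characteristic polynomial $(t+b\lambda)(t+b\mu)=t^2+abt-b^3$, i.e.\ \eqref{eqs:recur_gamma}. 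Finally the seven closed forms $\alpha_4,\alpha_5,\beta_4,\beta_5,\beta_6,\gamma_4,\gamma_5$ and the values $R_1,R_2,R_3$ are confirmed by direct substitution. I expect the rank bound for $A$ and $B$ --- equivalently, the unexpected drop from fourth to second order for $\gamma_q$ --- together with making the $q=4$ endpoint fit the uniform formulas, to be the only genuine obstacles; the rest is careful bookkeeping and routine symmetric-function algebra.
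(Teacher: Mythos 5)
Your proposal is correct, and its combinatorial core coincides with the paper's: your four transfer-matrix states (which of the two interface cells of the next column are covered by a protruding domino) are exactly the paper's subboards $\mathcal{A}_n$, $\mathcal{B}_n$, $\mathcal{C}_n$ together with the full board, your matrix $M_q$ is (up to transposition and a diagonal change of weight conventions) the paper's matrix $\mathbf{M}$ with entries $u_{q-2},u_{q-3},u_{q-4},u_{q-5}$ times monomials in $a,b$, and in both cases the order-four recurrence \eqref{eq:main_color} is read off from the characteristic polynomial, with $R_1,R_2,R_3$ obtained from low powers of the matrix. Where you genuinely diverge is in establishing \eqref{eqs:recur_alfa}--\eqref{eqs:recur_gamma} and $\delta_q=-b^{2(q-2)}$: the paper takes the explicit coefficient formulas (computed with \textsc{Maple}) and verifies each relation by repeated substitution of \eqref{eq:u}, proving the constant term by showing both sides satisfy $x^{q+1}=b^2x^q$; you instead use the Binet decomposition $M_q=\lambda^qA+\mu^qB$ and symmetric functions, so that $\alpha_q$ lies in the span of $\lambda^q,\mu^q$, $\beta_q$ in the span of $\lambda^{2q},\mu^{2q},(\lambda\mu)^q$, the rank-two structure of $A$ and $B$ (rows $2$ and $4$ are scalar multiples of rows $1$ and $3$) kills the $\lambda^{3q},\mu^{3q}$ parts of $\gamma_q$, and the block form of $\det M_q=b^4\bigl(u_{q-3}u_{q-5}-u_{q-4}^2\bigr)^2$ plus the Cassini identity gives $b^{2(q-2)}$. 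Your route buys conceptual explanations the paper does not offer (in particular why $\gamma_q$ obeys only a second-order recurrence and why the constant term is a pure power of $b$) and avoids computer algebra, at the price of a slightly longer setup; the paper's route is mechanical but shorter to state. Two small points to tidy up: the Binet argument tacitly assumes $\lambda\neq\mu$, i.e.\ $a^2+4b\neq0$ (harmless for positive color counts, or argue by polynomial identity in $a,b$), and your parenthetical that the two interface cells ``coincide'' at $q=4$ is not literally accurate, though the substantive claim there --- that the convention $u_{-1}=0$ makes the matrix entries correct at $q=4$ --- is right and matches the paper's $u_{q-5}=0$ in that case.
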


If $a=b=1$, then Theorem~\ref{th:maincolor} leads to the following corollary. Recall that $f_n=F_{n+1}$ (shifted Fibonacci numbers).
\begin{corollary}\label{cor:main}
	The sequence $\{r_n\}_{n=0}^\infty$ can be given by  the fourth order linear homogeneous recurrence relation  
\begin{equation}\label{eq:main}
r_n= 2f_{q-3}\ r_{n-1} +\left(5f_{q-4}^2+(-1)^{q-1} \right)r_{n-2} 	+ 2(-1)^q f_{q-5}\ r_{n-3} - r_{n-4},\quad (n\geq4)
\end{equation}	
with initial values 
$r_0=1$, 
$r_1=f_{q-2}$, 
$r_2=7f_{q-4}^2+7f_{q-4}f_{q-5}+2f_{q-5}^2$ and 
$r_3=22f_{q-4}^3+36f_{q-4}^2f_{q-5}+19f_{q-4}f_{q-5}^2+3f_{q-5}^3$.
\end{corollary}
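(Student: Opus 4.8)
The plan is to derive Corollary~\ref{cor:main} as the specialization $a=b=1$ of Theorem~\ref{th:maincolor}, so the task splits into two pieces: showing that the recurrence coefficients $\alpha_q,\beta_q,\gamma_q$ and the constant $b^{2(q-2)}$ collapse to the stated Fibonacci expressions, and showing that the four initial values $R_0,\dots,R_3$ collapse to $r_0,\dots,r_3$. For the constant term this is immediate, since $b^{2(q-2)}=1$. For $\alpha_q$, I would observe that $\alpha_{q+2}=\alpha_{q+1}+\alpha_q$ with $\alpha_4=2=f_2$, $\alpha_5=4$, so $\alpha_q$ satisfies the Fibonacci recurrence; checking that $2f_{q-3}$ has the same initial data ($2f_1=2$, $2f_2=4$) and the same recurrence finishes this case. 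For $\gamma_q$, the specialized recurrence is $\gamma_{q+2}=-\gamma_{q+1}+\gamma_{q}$ with $\gamma_4=0$, $\gamma_5=-2$, and one checks that $2(-1)^q f_{q-5}$ satisfies this: indeed writing $g_q=2(-1)^q f_{q-5}$ we have $g_{q+2}=2(-1)^q f_{q-3}=2(-1)^q(f_{q-4}+f_{q-5})=-g_{q+1}+g_q$, with the right starting values. The $\beta_q$ case is the bookkeeping-heaviest: the specialized recurrence is $\beta_{q+3}=2\beta_{q+2}+2\beta_{q+1}-\beta_q$, and I would verify that $h_q:=5f_{q-4}^2+(-1)^{q-1}$ satisfies it, using the standard identities $f_{q-4}=F_{q-3}$ together with $F_{m}^2+F_{m+1}^2=F_{2m+1}$ or more directly Catalan/Cassini-type identities such as $5F_m^2=F_{2m}L_m\cdot(\cdots)$; alternatively, and more cleanly, one can note that $5F_m^2+4(-1)^m=L_{2m}$ or $5F_m^2=L_{2m}-2(-1)^m$, reducing $h_q$ to a Lucas-number expression for which the third-order recurrence is routine. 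Either way one also checks the three initial values $h_4=5f_0^2+(-1)^3=4=\beta_4$, $h_5=5f_1^2+(-1)^4=6=\beta_5$, $h_6=5f_2^2+(-1)^5=19=\beta_6$ against $\beta_4=2,\ \beta_5=?$ — here care is needed because at $a=b=1$ one has $\beta_4=2(1+1)=4$, $\beta_5=(1+1)(1+2)=6$, $\beta_6=1+6+10+2=19$, so the match is exact.

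For the initial values, the strategy is direct substitution $a=b=1$ into the formulas for $R_1,R_2,R_3$ followed by simplification using the Fibonacci recurrence $f_{q-2}=f_{q-3}+f_{q-4}$ to eliminate $u_{q-2}=f_{q-2}$ in favor of $f_{q-3},f_{q-4}$, and similarly to reduce $f_{q-5}$-terms. Starting from $R_1=u_{q-2}=f_{q-2}=f_{q-3}+f_{q-4}$ — but the corollary states $r_1=f_{q-2}$, so that one needs no rewriting. For $r_2$: setting $a=b=1$, $R_2=u_{q-2}^2+u_{q-4}u_{q-3}+u_{q-3}^2+u_{q-4}^2$; now substitute $u_{q-2}=u_{q-3}+u_{q-4}$ to get $R_2=(u_{q-3}+u_{q-4})^2+u_{q-3}^2+u_{q-3}u_{q-4}+u_{q-4}^2=2u_{q-3}^2+3u_{q-3}u_{q-4}+2u_{q-4}^2$, then substitute $u_{q-3}=f_{q-3}=f_{q-4}+f_{q-5}$ once more and collect to reach $7f_{q-4}^2+7f_{q-4}f_{q-5}+2f_{q-5}^2$. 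The computation for $r_3$ is the same kind of maneuver applied to the longer expression for $R_3$: set $a=b=1$, eliminate $u_{q-2}$ via $u_{q-2}=u_{q-3}+u_{q-4}$, then eliminate $u_{q-3}=u_{q-4}+u_{q-5}$, collecting everything in terms of $f_{q-4}$ and $f_{q-5}$; one expects to land on $22f_{q-4}^3+36f_{q-4}^2f_{q-5}+19f_{q-4}f_{q-5}^2+3f_{q-5}^3$.

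The main obstacle I anticipate is purely computational rather than conceptual: the $\beta_q$ verification and the $R_3\to r_3$ reduction both involve moderately long polynomial manipulations where a single sign or coefficient slip propagates, and the cleanest route for $\beta_q$ is probably to pass through Lucas numbers via $5F_m^2=L_{2m}-2(-1)^m$ so that the third-order recurrence $\beta_{q+3}=2\beta_{q+2}+2\beta_{q+1}-\beta_q$ becomes a statement about $L_{2q-6}$ and the alternating sign, which factors as $(x-1)(x^2+3x+1)$ — note $x=1$ handles the $(-1)^{q-1}$ piece since that part is killed by $\beta_{q+3}-2\beta_{q+2}-2\beta_{q+1}+\beta_q$ evaluated on $(-1)^q$, and $x^2+3x+1$ has roots $\varphi^{\pm2}$ where $\varphi$ is the golden ratio, matching $L_{2m}$. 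I would present the proof as: (i) note $b^{2(q-2)}=1$; (ii) identify each of $\alpha_q,\beta_q,\gamma_q$ with its closed form by checking the (now constant-coefficient) recurrences and enough initial terms, citing Binet/Cassini/Lucas identities as needed; (iii) substitute $a=b=1$ into the four initial values of Theorem~\ref{th:maincolor} and simplify with $f_{q-2}=f_{q-3}+f_{q-4}$ and $f_{q-3}=f_{q-4}+f_{q-5}$; then invoke Theorem~\ref{th:maincolor} to conclude that $\{r_n\}$ satisfies \eqref{eq:main} with the stated initial data.
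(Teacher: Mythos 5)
Your proposal is correct in substance, but it follows a somewhat different route from the paper. The paper proves the corollary by first writing the coefficients explicitly as polynomials in $u_{q-4}$ and $u_{q-5}$, namely $\alpha_q=(a^2+b)u_{q-4}+2abu_{q-5}$, $\beta_q=(2a^2+2b)bu_{q-4}^2+(-a^3+2ab)bu_{q-4}u_{q-5}+(-a^2b+2b^2)bu_{q-5}^2$, etc., then setting $a=b=1$ and collapsing $\beta_q=4f_{q-4}^2+f_{q-4}f_{q-5}+f_{q-5}^2$ and $\gamma_q=2f_{q-5}(f_{q-4}^2-f_{q-4}f_{q-5}-f_{q-5}^2)$ via the single Cassini-type identity $F_n^2-F_nF_{n-1}-F_{n-1}^2=(-1)^{n-1}$; the initial values $r_1,r_2,r_3$ are then obtained exactly as you do, by substituting $a=b=1$ into $R_1,R_2,R_3$ and reducing with the Fibonacci recurrence. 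You instead work only from the statement of Theorem~\ref{th:maincolor}: specialize the $q$-recurrences \eqref{eqs:recur_alfa}--\eqref{eqs:recur_gamma} at $a=b=1$ and identify $2f_{q-3}$, $5f_{q-4}^2+(-1)^{q-1}$, $2(-1)^qf_{q-5}$ by matching recurrence and enough initial terms (uniqueness of solutions), using $5F_m^2=L_{2m}-2(-1)^m$ for the $\beta$ case. That is a legitimate and self-contained alternative; it avoids the explicit $u$-polynomial forms (which in the paper come out of the Maple computation inside the proof of the theorem), at the cost of a slightly heavier verification for $\beta$. One slip to fix: the characteristic polynomial of $\beta_{q+3}=2\beta_{q+2}+2\beta_{q+1}-\beta_q$ is $x^3-2x^2-2x+1=(x+1)(x^2-3x+1)$, not $(x-1)(x^2+3x+1)$; it is the root $x=-1$ that absorbs the $(-1)^{q-1}$ part (consistent with your own computation that the recurrence operator kills $(-1)^q$), and it is $x^2-3x+1$ whose roots are $\varphi^{\pm2}$, matching $L_{2m}$. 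With that correction, and since your checks of the initial data ($\alpha_4=2,\alpha_5=4$; $\beta_4=4,\beta_5=6,\beta_6=19$; $\gamma_4=0,\gamma_5=-2$, using $f_{-1}=0$) and the reductions of $R_2,R_3$ to $7f_{q-4}^2+7f_{q-4}f_{q-5}+2f_{q-5}^2$ and $22f_{q-4}^3+36f_{q-4}^2f_{q-5}+19f_{q-4}f_{q-5}^2+3f_{q-5}^3$ are accurate, the argument goes through.
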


Observe, that if $q=4$, then  \eqref{eq:main_color} returns with \eqref{eq:tilcol_44} (compute the sum of $R_n$ and $bR_{n-1}$). Similarly, the extension of \eqref{eq:til_44} is \eqref{eq:main}.

\section{Tilings on mosaic $\{4,q\}$}

We can see that our tiling exercise of the hyperbolic $(2\times 1)$-board on the mosaic $\{4,q\}$ ($q\geq 5$) is the same as the tiling exercise of the Euclidean  $\big( 1\times (q-2)\big)$-board. So $R_1=u_{q-2}$ and $r_1=f_{q-2}$ (Figure~\ref{fig:board4q}).

Before the discussion of the main result, we define the break-ability of  a tiling.  
A tiling of a $(2\times n)$-board is breakable in position $i$ for $1\leq i\leq n-1$, if this tiling is a concatenation of the tilings of a $(2\times i)$-subboard and a $\big(2\times (n-i)\big)$-subboard. Clearly, the number of colored tilings of such a board is $R_iR_{n-i}$. A tiling is unbreakable in position $i$ in three different ways: if a domino covers  the last square of the first subboard and the first square of the second subboard either in the first or the second level, or on both levels (see Figure~\ref{fig:unbreak_in_m}).   
\begin{figure}[!h]
	\centering
	\includegraphics[scale=0.9]{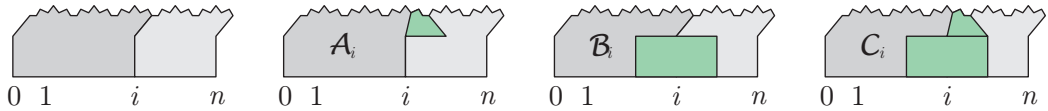}
	\caption{Breakable and unbreakable tilings in position $i$ when $q=7$}
	\label{fig:unbreak_in_m}
\end{figure}

Now, we define three subboards. Let $\mathcal{A}_i$,  $\mathcal{B}_i$ and  $\mathcal{C}_i$ be the subboards of $(2\times i)$-board ($1\leq i\leq n$), respectively, where the last square from second level,  the last square from first level and the last squares from both levels are deleted from $(2\times i)$-board. In Figure~\ref{fig:unbreak_in_m} these subboards are illustrated. Let $A_i$,  $B_i$ and  $C_i$ denote the number of  different colored tilings of $\mathcal{A}_i$,  $\mathcal{B}_i$ and  $\mathcal{C}_i$, respectively.

\subsection{Proof of Theorem \ref{th:maincolor} and Corollary \ref{cor:main}}

Our proof is based on the connections among $(2\times n)$-board, $\mathcal{A}_n$,  $\mathcal{B}_n$ and  $\mathcal{C}_n$ subboards. We can easily give the number of tilings if  $n=1$. They are  $R_1=u_{q-2}$, $A_1=u_{q-4}$, $B_1=u_{q-3}$ and $C_1=u_{q-4}$. Moreover let  $R_0=1$, $A_0=B_0=C_0=0$.

Generally, if $n\geq2$, then Figure~\ref{fig:recu_system_4q_fig} shows the recurrence connections of the subboards. For example, let us see the first row. We can build a full $(2\times n)$-board by four different ways from the full $\big(2\times (n-1)\big)$-board or from the subboards $\mathcal{A}_{n-1}$,  $\mathcal{B}_{n-1}$ and  $\mathcal{C}_{n-1}$. If we join a suitable $(2\times 1)$-board to a $\big(2\times (n-1)\big)$-board, then the coefficient $u_{q-2}$ is obvious in case of the breakable tilings in position $n-1$. When we complete $\mathcal{A}_{n-1}$ to a full $(2\times n)$-board, we have a domino in the second level with $b$ different colors, and we put a square onto the first level with $a$ colors.  
(If we replace the laid down domino in the second level with two squares, then these tilings would be a part of the first case when we completed the $\big(2\times (n-1)\big)$-board.)    
The rest part can be tiled freely. 
Consequently, the coefficient of $A_{n-1}$ is $abu_{q-4}$ and these are unbreakable tilings in position $n-1$.  
Now, let us complete $\mathcal{B}_{n-1}$ and $\mathcal{C}_{n-1}$ to be full $(2\times n)$-board with a domino in the first level or with two dominoes,  one is  in the first level and the other in the second level, respectively. The rest parts can be tiled freely. We obtain $bu_{q-3}$ and $b^2u_{q-4}$ new (unbreakable in position $n-1$)  tilings. 
Summarising the result of the first row of Figure~\ref{fig:recu_system_4q_fig} we have the first equation of the system of recurrence equations \eqref{eq:rec_system}. The determinations of the other rows can be explained similarly. We mention, that, for example, in the fourth row $\mathcal{B}_{n-1}$ does not appear, because 
when we complete it to $\mathcal{C}_{n}$ we do not have new tiling type,  the tilings are in the first tiling types in the same row. (The yellow square would be in the grey $\big(2\times (n-1)\big)$-board -- see the last row in Figure~\ref{fig:recu_system_4q_fig}.)
\begin{figure}[!h]
	\centering
	\includegraphics[scale=0.9]{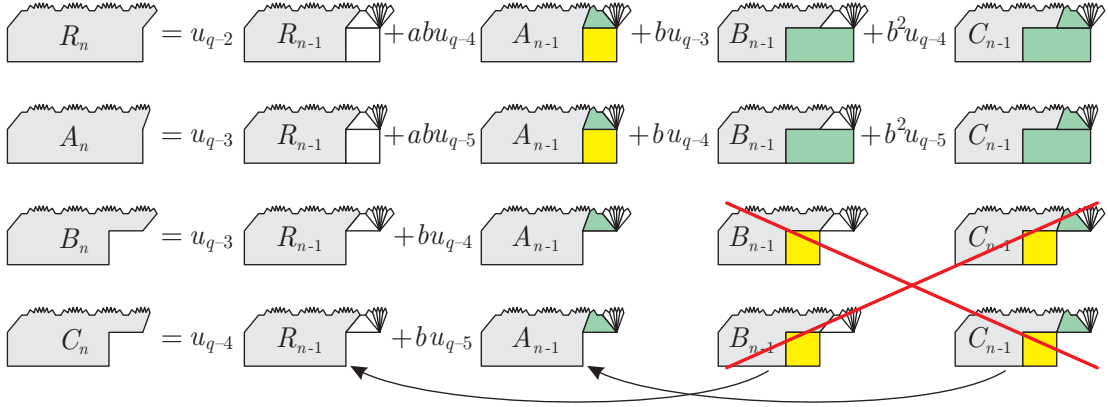}
	\caption{Base of recurrence connections of the subboards}
	\label{fig:recu_system_4q_fig}
\end{figure}

Hence the recurrence equations for $n\geq1$ satisfy the system 
\begin{equation}\label{eq:rec_system}
\begin{array}{ccl}
R_n&=& u_{q-2}R_{n-1}+ab\,u_{q-4}A_{n-1}+b\,u_{q-3}B_{n-1}+b^2\,u_{q-4}C_{n-1}\\ 
A_n&=& u_{q-3}R_{n-1}+ab\,u_{q-5}A_{n-1}+b\,u_{q-4}B_{n-1}+b^2\,u_{q-5}C_{n-1}\\
B_n&=& u_{q-3}R_{n-1}+b\,u_{q-4}A_{n-1}\\
C_n&=& u_{q-4}R_{n-1}+b\,u_{q-5}A_{n-1}
\end{array}.
\end{equation}
Recall that the initial values are $R_0=1$, $A_0=B_0=C_0=0$.

The matrix of the coefficients of \eqref{eq:rec_system} is 
$$\mathbf{M}=\begin{pmatrix}
u_{q-2} & ab\,u_{q-4} & b\,u_{q-3} & b^2\,u_{q-4}\\
u_{q-3} & ab\,u_{q-5} &b\, u_{q-4} & b^2\,u_{q-5}\\
u_{q-3} & b\,u_{q-4} & 0 & 0\\
u_{q-4} & b\,u_{q-5} & 0 & 0
\end{pmatrix}. $$
As usual, the characteristic equation of $\mathbf{M}$ provides the recurrence relation for $\{R_n\}$ (and $\{A_n\}$, $\{B_n\}$, $\{C_n\}$; see the proof in \cite{NSz_Power}. The computation was made by the help of software \textsc{Maple}.) Thus we have
\begin{equation}
R_n= \alpha_q\,R_{n-1} +\beta_q\,R_{n-2}+ \gamma_q\,R_{n-3} +\delta_q R_{n-4} \quad (n\geq4),
\end{equation}
where (with some calculation using \eqref{eq:u})
\begin{eqnarray*}
	\alpha_{q} &=& abu_{q-5}+u_{q-2},\\
	\beta_{q} &=& b(b^2u_{q-5}^2 -au_{q-5}u_{q-2} +2bu_{q-4}^2 +au_{q-4}u_{q-3} +u_{q-3}^2),\\
	\gamma_{q} &=& -b^2(bu_{q-5}^2u_{q-2}-2u_{q-4}u_{q-3}^2+au_{q-5}u_{q-3}^2 +u_{q-4}^2u_{q-2} ), \\
	\delta_{q} &=& -b^4(u_{q-5}^2 u_{q-3}^2-2u_{q-5}u_{q-4}^2 u_{q-3}+u_{q-4}^4).	
\end{eqnarray*}
Moreover, we obtain the initial values of the recurrence for $n=1,2,3$ from system \eqref{eq:rec_system}. They are $R_1=u_{q-2}$,
$R_2=u_{q-2}^2+abu_{q-4}u_{q-3}+bu_{q-3}^2+b^2u_{q-4}^2$ and
\begin{multline}
R_3=(u_{q-2}^2+abu_{q-4}u_{q-3}+bu_{q-3}^2+b^2u_{q-4}^2)u_{q-2} \\
+(abu_{q-2}u_{q-4}+a^2b^2u_{q-4}u_{q-5}+b^2u_{q-3}u_{q-4}+b^3u_{q-4}u_{q-5})u_{q-3}\\
+(bu_{q-2}u_{q-3}+ab^2u_{q-4}^2)u_{q-3}
+(b^2u_{q-2}u_{q-4}+ab^3u_{q-4}u_{q-5})u_{q-4}.
\end{multline}

In the next part, we prove that relations \eqref{eqs:recur_alfa}--\eqref{eqs:recur_gamma} hold. 
Firstly, we insert $\alpha_{q+2}$, $\alpha_{q+1}$ and $\alpha_{q}$ into  \eqref{eqs:recur_alfa} to have
\begin{eqnarray} \label{eq:alfa_q}
abu_{q-3}+u_{q} &=& a( abu_{q-4}+u_{q-1})+b( abu_{q-5}+u_{q-2}).
\end{eqnarray}
Apply \eqref{eq:u} consecutively with $n=q,q-1,\dots$ as follows. First plug $u_q$ into the equation \eqref{eq:alfa_q}, then substitute $u_{q-1}$ in the new equation, and so an. Finally, when $n=q-3$, we find that \eqref{eq:alfa_q} is an identity, so \eqref{eqs:recur_alfa} holds. If $q=4$ and $q=5$, then $\alpha_q$ provides the initial values.
The proofs of \eqref{eqs:recur_beta} and \eqref{eqs:recur_gamma} go similarly.

Finally, we show that   
$\delta_{q} = -b^{2(q-2)}$. 
For $q=4$ 
we immediately obtain 
$\delta_{4} = -b^4(u_{4-5}^2 u_{4-3}^2-2u_{4-5}u_{4-4}^2 u_{4-3}+u_{4-4}^4)= -b^{2\cdot 2}$. 
Then we consider the recurrence relation ($q\geq4$)
\begin{equation}\label{eq:xq}
	x^{q+1} = b^2 x^{q}.
\end{equation}
Some calculations show that both  expressions ($\delta_{q}$ and $-b^{2(q-2)}$) satisfies recursion \eqref{eq:xq}, which implies the equality.

We express the values by $u_{q-4}$ and $u_{q-5}$ by using relation \eqref{eq:u}. Thus we have
\begin{eqnarray*}
	\alpha_{q} &=& (a^2+b)u_{q-4}+2abu_{q-5},\\
	\beta_{q}  &=& (2a^2+2b)bu_{q-4}^2+(-a^3+2ab)bu_{q-4}u_{q-5}+(-a^2b+2b^2)bu_{q-5}^2,\\
	\gamma_{q} &=& (a^2-b)b^2u_{q-4}^3-(a^3-3ab)b^2u_{q-4}^2u_{q-5}-(3a^2b-b^2)b^2u_{q-4}u_{q-5}^2-2ab^4u_{q-5}^3, \\
	\delta_{q} &=& -b^{2(q-2)}.	
\end{eqnarray*}
As $F_{n}^2-F_{n}F_{n-1}-F_{n-1}^2=(-1)^{n-1}$, if $a=b=1$, then we obtain
\begin{eqnarray*}
	\alpha_{q} &=& 2f_{q-4}+2f_{q-5}=2f_{q-3},\\
	\beta_{q}  &=& 4f_{q-4}^2+f_{q-4}f_{q-5}+f_{q-5}^2=5f_{q-4}^2+(-1)^{q-1},\\
	\gamma_{q} &=&2f_{q-4}^2f_{q-5}-2f_{q-4}f_{q-5}^2-2f_{q-5}^3 = 2(-1)^qf_{q-5}, \\
	\delta_{q} &=& -1.	
\end{eqnarray*}

Now the initial values $R_i$ lead to the initial values $r_i$ ($i=1,2,3$).

\subsection{Unbreakable tilings}

In this subsection we determine the number of unbreakable tilings. Let $\widetilde{r}_n$ (and $\widetilde{R}_n$) be the number of  different unbreakable tilings with (colored) squares and dominoes of $(2\times n)$-board of $\{4,q\}$.
Moreover, let $\widetilde{A}_i$,  $\widetilde{B}_i$ and  $\widetilde{C}_i$ denote the number of the different unbreakable colored tilings of  $\mathcal{A}_i$,  $\mathcal{B}_i$ and  $\mathcal{C}_i$, respectively. 

\begin{theorem}\label{th:tilcol_rec_unbreak}
	The sequence $\{\widetilde{R}_n\}$ can be described by  the binary recurrence relation 
\begin{equation*}
\widetilde{R}_n = abu_{q-5}\widetilde{R}_{n-1}
+b^2\left(u_{q-4}^2+bu_{q-5}^2 \right)\widetilde{R}_{n-2},\quad (n\geq3) 
\end{equation*}
where the initial values are $\widetilde{R}_1=u_{q-2}$ and $\widetilde{R}_2=abu_{q-3}u_{q-4}+bu_{q-3}^2+b^2u_{q-4}^2$.
\end{theorem}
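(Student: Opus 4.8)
The plan is to mimic the breakable-tiling argument of Theorem~\ref{th:maincolor}, but now restricted to unbreakable tilings. The key observation is that an unbreakable tiling of a $(2\times n)$-board is obtained by taking a tiling (full board or one of the three deficient subboards $\mathcal{A}$, $\mathcal{B}$, $\mathcal{C}$) of a $(2\times(n-1))$-board and attaching the last column so that the junction between column $n-1$ and column $n$ is covered by a domino on at least one level. Thus I would set up a linear system for the four quantities $\widetilde{R}_n$, $\widetilde{A}_n$, $\widetilde{B}_n$, $\widetilde{C}_n$ that is the ``unbreakable analogue'' of \eqref{eq:rec_system}: the coefficient matrix is essentially $\mathbf{M}$ with its first column modified, because the only bridging move that is \emph{not} of the three unbreakable types is the breakable one (joining a full $(2\times1)$-board with coefficient $u_{q-2}$), which must be removed. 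Concretely, the first-column entries $u_{q-2}, u_{q-3}, u_{q-3}, u_{q-4}$ of $\mathbf{M}$ become $0, u_{q-3}, u_{q-3}, u_{q-4}$ — the deficient subboards still contribute via the domino-bridge moves, but the full board contributes nothing to an unbreakable extension through a single square in the last column (that would be breakable). I would draw the analogue of Figure~\ref{fig:recu_system_4q_fig} or at least spell out each of the four rows verbally, exactly as was done for \eqref{eq:rec_system}.

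Having obtained the reduced system, the next step is to read off the recurrence for $\{\widetilde{R}_n\}$ from the characteristic polynomial of the new coefficient matrix, just as in the proof of Theorem~\ref{th:maincolor} (citing \cite{NSz_Power} and a \textsc{Maple} computation). Because the first column now has a zero in the top entry, the characteristic polynomial should degenerate from degree four to (effectively) degree two after accounting for the zero eigenvalue(s), yielding the binary recurrence
\begin{equation*}
\widetilde{R}_n = abu_{q-5}\widetilde{R}_{n-1}+b^2\bigl(u_{q-4}^2+bu_{q-5}^2\bigr)\widetilde{R}_{n-2},\quad (n\ge 3).
\end{equation*}
The coefficient $abu_{q-5}$ is recognizable as the trace of the relevant $2\times2$ block (the $A$-to-$A$ and $C$-to-$C$ interactions), and $b^2(u_{q-4}^2+bu_{q-5}^2)$ as the corresponding ``determinant'' contribution; I would verify this by the same substitution technique used for \eqref{eqs:recur_alfa}--\eqref{eqs:recur_gamma}, repeatedly applying \eqref{eq:u} to reduce everything to an identity in $u_{q-4}$ and $u_{q-5}$.

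The initial values are then handled directly. For $n=1$ every tiling of the $(2\times1)$-board is vacuously unbreakable (there is no interior position $i$ with $1\le i\le n-1$), so $\widetilde{R}_1=R_1=u_{q-2}$. For $n=2$ I would enumerate the unbreakable tilings of the $(2\times2)$-board by which levels carry the bridging domino in position $1$: a domino on the second level only (the first level then tiled with a square column, giving $ab\,u_{q-3}u_{q-4}$ by the subboard counts), a domino on the first level only ($b\,u_{q-3}^2$), or dominoes on both levels ($b^2u_{q-4}^2$); summing gives $\widetilde{R}_2=abu_{q-3}u_{q-4}+bu_{q-3}^2+b^2u_{q-4}^2$. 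I expect the main obstacle to be purely bookkeeping: making sure that the four rows of the reduced system correctly exclude the breakable extension while not double-counting the domino-bridge configurations across the first and second levels — precisely the subtlety already flagged in the remark about the fourth row of Figure~\ref{fig:recu_system_4q_fig} — and then trusting the \textsc{Maple} characteristic-polynomial computation to collapse the system to the stated binary recurrence.
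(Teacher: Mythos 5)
Your high-level plan --- specialize the transfer system \eqref{eq:rec_system} to unbreakable tilings and extract the recurrence from the characteristic polynomial of the reduced matrix --- is exactly the paper's strategy, but the reduced system you write down is not the correct one, and this is a genuine gap, not bookkeeping. Unbreakability has to be imposed at \emph{every} position $1\le i\le n-1$, for the deficient subboards $\mathcal{A}_n,\mathcal{B}_n,\mathcal{C}_n$ just as for the full board. The first-column entries $u_{q-3},u_{q-3},u_{q-4}$ that you keep in the rows for $\widetilde{A}_n,\widetilde{B}_n,\widetilde{C}_n$ correspond to gluing a freely tiled last (partial) column onto a full $(2\times(n-1))$-board; every tiling produced this way is breakable in position $n-1$, so it must not be counted in $\widetilde{A}_n,\widetilde{B}_n,\widetilde{C}_n$, and once such terms are fed back through the first row one step later, your $\widetilde{R}$ picks up tilings of the full board that are breakable at an interior position, i.e.\ it overcounts. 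The paper deletes the \emph{entire} first column: $\widetilde{R}_n=abu_{q-4}\widetilde{A}_{n-1}+bu_{q-3}\widetilde{B}_{n-1}+b^2u_{q-4}\widetilde{C}_{n-1}$, $\widetilde{A}_n=abu_{q-5}\widetilde{A}_{n-1}+bu_{q-4}\widetilde{B}_{n-1}+b^2u_{q-5}\widetilde{C}_{n-1}$, $\widetilde{B}_n=bu_{q-4}\widetilde{A}_{n-1}$, $\widetilde{C}_n=bu_{q-5}\widetilde{A}_{n-1}$, with the auxiliary initial data $\widetilde{A}_1=u_{q-3}$, $\widetilde{B}_1=u_{q-3}$, $\widetilde{C}_1=u_{q-4}$, which your write-up never fixes.

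The algebra confirms that your version cannot work even formally. Changing only the $(1,1)$ entry of $\mathbf{M}$ to $0$ does not change the determinant (the corresponding cofactor is zero because the last two columns of that minor vanish outside its first row), so your matrix still has determinant $\pm b^{2(q-2)}\neq0$; it has no zero eigenvalue, its characteristic polynomial remains genuinely quartic, and the collapse to a binary recurrence that your argument relies on cannot occur. With the whole first column zeroed, the characteristic polynomial is $x^2\bigl(x^2-abu_{q-5}\,x-b^2(u_{q-4}^2+bu_{q-5}^2)\bigr)$, which is exactly the stated relation; your reading of $abu_{q-5}$ and $b^2(u_{q-4}^2+bu_{q-5}^2)$ as trace and determinant-type contributions of the surviving block, and your derivation of $\widetilde{R}_1=u_{q-2}$ and $\widetilde{R}_2=abu_{q-3}u_{q-4}+bu_{q-3}^2+b^2u_{q-4}^2$, are fine. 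One caveat that applies even to the corrected argument: because of the double root $x=0$, a Cayley--Hamilton argument only guarantees the second-order relation once the nilpotent component has been annihilated, so the validity of the recurrence for the smallest values of $n$ should be verified directly from the system rather than asserted from the characteristic polynomial alone.
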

\begin{proof}
The proof is similar to the proof of the first theorem. By deleting the breakable tilings from Figure~\ref{fig:recu_system_4q_fig} (the second column) we gain the system of recurrence sequences ($n\geq2$)
\begin{eqnarray*}
	\widetilde{R}_n&=& ab u_{q-4}\widetilde{A}_{n-1} +bu_{q-3}\widetilde{B}_{n-1} +b^2u_{q-4}\widetilde{C}_{n-1}\\ 
	\widetilde{A}_n&=& ab u_{q-5}\widetilde{A}_{n-1} +bu_{q-4}\widetilde{B}_{n-1} +b^2u_{q-5}\widetilde{C}_{n-1}\\
	\widetilde{B}_n&=& b u_{q-4}\widetilde{A}_{n-1}\\
	\widetilde{C}_n&=& b u_{q-5}\widetilde{A}_{n-1}
\end{eqnarray*}
with initial values  $\widetilde{R}_1=u_{q-2}$, $\widetilde{A}_1=u_{q-3}$, $\widetilde{B}_1=u_{q-3}$, $\widetilde{C}_1=u_{q-4}$.
The characteristic equation of its coefficients matrix gives the recurrence for $\widetilde{R}_n$.  
From the system of recurrence sequences we gain
$\widetilde{R}_2$.
\end{proof}

Supposing $a=b=1$, together with 
\begin{eqnarray*}
\widetilde{r}_2 &=& 3f_{q-4}^2+3f_{q-4}f_{q-5}+f_{q-5}^2 
  = 4f_{q-4}^2+2f_{q-4}f_{q-5}+(-1)^{q-1} \\ 
  &=& 2f_{q-4}(2f_{q-4}+f_{q-5})+(-1)^{q-1},
\end{eqnarray*} 
  we obtain the following corollary.
\begin{corollary}\label{cor:til_rec_unbreak}
	The sequence $\{\widetilde{r}_n\}$ satisfies the binary recurrence relation 
	\begin{equation*}
	\widetilde{r}_n = f_{q-5}\widetilde{r}_{n-1}
	+\left(f_{q-4}^2+f_{q-5}^2 \right)\widetilde{r}_{n-2},\quad (n\ge3)
	\end{equation*}
	with coefficients linked to Fibonacci numbers, where the initial values are $\widetilde{r}_1=f_{q-2}$ and $\widetilde{r}_2=2f_{q-4}f_{q-2}+(-1)^{q-1}$.
\end{corollary}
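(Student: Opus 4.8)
The statement is a corollary of Theorem~\ref{th:tilcol_rec_unbreak}, so the plan is simply to set $a=b=1$ in that theorem and then tidy the Fibonacci indices; there is no genuinely new content. (One could instead rerun the subboard argument used to prove Theorem~\ref{th:tilcol_rec_unbreak} from scratch with $a=b=1$, but since that theorem is already established in full generality the substitution route is the efficient one.) The first thing I would record is that for $a=b=1$ the generalized Fibonacci sequence $\{u_n\}$ of \eqref{eq:u} coincides with $\{f_n\}$: both satisfy $x_n=x_{n-1}+x_{n-2}$ and have $x_0=x_1=1$ (and $x_{-1}=0$), hence $u_n=f_n$ for every index occurring below, including the negative ones needed for small $q$.

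Substituting $u_n=f_n$ into Theorem~\ref{th:tilcol_rec_unbreak} turns the coefficient $abu_{q-5}$ into $f_{q-5}$ and $b^2(u_{q-4}^2+bu_{q-5}^2)$ into $f_{q-4}^2+f_{q-5}^2$, which is exactly the recurrence
$$\widetilde{r}_n=f_{q-5}\,\widetilde{r}_{n-1}+\bigl(f_{q-4}^2+f_{q-5}^2\bigr)\widetilde{r}_{n-2}\qquad(n\ge 3)$$
asserted in Corollary~\ref{cor:til_rec_unbreak}, while the initial value $\widetilde{R}_1=u_{q-2}$ gives at once $\widetilde{r}_1=f_{q-2}$.

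It remains to check the closed form for $\widetilde{r}_2$. From $\widetilde{R}_2=abu_{q-3}u_{q-4}+bu_{q-3}^2+b^2u_{q-4}^2$ with $a=b=1$ one gets $\widetilde{r}_2=f_{q-3}f_{q-4}+f_{q-3}^2+f_{q-4}^2$, and expanding $f_{q-3}=f_{q-4}+f_{q-5}$ yields $\widetilde{r}_2=3f_{q-4}^2+3f_{q-4}f_{q-5}+f_{q-5}^2$, the expression displayed just above the corollary. To reach the stated form I would invoke the identity $F_n^2-F_nF_{n-1}-F_{n-1}^2=(-1)^{n-1}$ (the same one used in the proof of Theorem~\ref{th:maincolor}) with $n=q-3$, which in the shifted notation $f_m=F_{m+1}$ reads $f_{q-4}^2-f_{q-4}f_{q-5}-f_{q-5}^2=(-1)^{q}=-(-1)^{q-1}$. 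Adding $f_{q-4}^2-f_{q-4}f_{q-5}-f_{q-5}^2+(-1)^{q-1}=0$ to $\widetilde{r}_2$ rewrites it as $4f_{q-4}^2+2f_{q-4}f_{q-5}+(-1)^{q-1}=2f_{q-4}(2f_{q-4}+f_{q-5})+(-1)^{q-1}$, and since $2f_{q-4}+f_{q-5}=f_{q-4}+f_{q-3}=f_{q-2}$ this equals $2f_{q-4}f_{q-2}+(-1)^{q-1}$, as claimed. The only ``obstacle'' is clerical: keeping the index shift between $F_n$ and $f_n=F_{n+1}$ straight in the Cassini-type identity, and noting it still holds at the smallest admissible $q$ (where $f_{-1}=0$), e.g.\ $\widetilde{r}_2=3$ for $q=4$ and $\widetilde{r}_2=7$ for $q=5$.
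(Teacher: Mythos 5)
Your proposal is correct and follows essentially the same route as the paper: set $a=b=1$ in Theorem~\ref{th:tilcol_rec_unbreak} (so $u_n=f_n$), read off the recurrence and $\widetilde{r}_1$, and rewrite $\widetilde{r}_2=3f_{q-4}^2+3f_{q-4}f_{q-5}+f_{q-5}^2$ as $2f_{q-4}f_{q-2}+(-1)^{q-1}$ via the identity $F_n^2-F_nF_{n-1}-F_{n-1}^2=(-1)^{n-1}$, which is exactly the chain of equalities the authors display just before the corollary. Your index bookkeeping in the shifted notation and the small-$q$ sanity checks are accurate, so nothing further is needed.
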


\section{Some identities}

In the sequel, we give certain identities related to the sequences $\{R_n\}$ and $\{\widetilde{R}_{n}\}$. The proofs are based on the tilings, not on the recursive formulae.

\begin{identity} \label{id:RR}
	If $n\geq1$, then
   \begin{equation*}	
      R_n=\sum_{i=0}^{n-1}R_i\widetilde{R}_{n-i}.	
   \end{equation*}
\end{identity}
\begin{proof}
Let us consider the breakable colored tilings in position $i$ ($0\leq i <n$) of $(2\times n)$-board, where the tilings on the right $\big(2\times (n-i)\big)$-subboard are unbreakable (see Figure~\ref{fig:identity_RR}). The number of this tilings is $R_i\widetilde{R}_{n-i}$. If $i=0$, then the tilings are unbreakable on the whole $(2\times n)$-board. Clearly, when $i$ goes from 1 to $n-1$, we have different tiling and we consider all of them exhaustedly.  
\end{proof}
\begin{figure}[!h]
	\centering
	\includegraphics[scale=0.9]{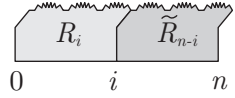}
	\caption{Breakable tilings in position  $i$ in case of Identity \ref{id:RR}}
	\label{fig:identity_RR}
\end{figure}

An equivalent form of Identity \ref{id:RR} is
\begin{identity} \label{id:RR2}
	If $n\geq1$, then
	\begin{equation*}	
	R_n=\sum_{i=1}^{n}R_{n-i}\widetilde{R}_{i}.	
	\end{equation*}
\end{identity}

The next statement gives another rule of summation.

\begin{identity} \label{id:RRR}
  If $m\geq1$ and $n\geq1$, then
  \begin{equation*}	
  R_{n+m}=R_{n}R_{m}+\sum_{i=1}^{n}\sum_{j=1}^{m}R_{n-i}R_{m-j}\widetilde{R}_{i+j}.	
  \end{equation*}
\end{identity}
\begin{proof}
Let us consider a $\big(2\times (n+m)\big)$-board as the concatenation of $(2\times n)$-board and $(2\times m)$-board (in other words, tilings are breakable in position $n$).	First we take the breakable tilings in position $n$, their cardinality is $R_nR_m$. Then we examine the unbreakable tilings in this position. We cover the position $n$ by $i+j$ long unbreakable tilings from position $n-i$ to $n+j$. They give the rest tilings. Figure~\ref{fig:identity_RRR} illustrates these two cases.
\end{proof}
\begin{figure}[!h]
	\centering
	\includegraphics[scale=0.9]{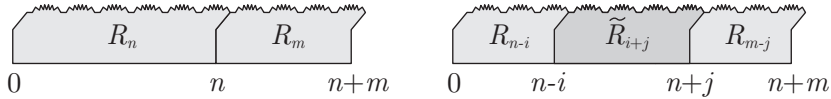}
	\caption{Tilings in case of Identity \ref{id:RRR}}
	\label{fig:identity_RRR}
\end{figure}

Identity \ref{id:RRR} admits the following remarkable specific cases by the choice of $m=1$, $m=(k-1)n$ and $n=n-k$, $m=n+k$, respectively.
\begin{identity} \label{id:RRR2}
	If  $n\geq1$, then
	\begin{equation*}	
	R_{n+1}=R_{n}R_{1}+\sum_{i=1}^{n}R_{n-i}\widetilde{R}_{i+1}.	
	\end{equation*}
\end{identity}

\begin{identity} \label{id:RRR3}
	If $n\geq1$  and $k\geq2$, then
	\begin{equation*}	
      R_{kn}=R_{n}R_{(k-1)n}+\sum_{i=1}^{n}\sum_{j=1}^{(k-1)n}R_{n-i}R_{(k-1)n-j}\widetilde{R}_{i+j}.	
	\end{equation*}
\end{identity}

\begin{identity} \label{id:RRR4}
	If     $n > k\geq0$ then
	\begin{equation*}	
	R_{2n}=R_{n-k}R_{n+k}+\sum_{i=1}^{n-k}\sum_{j=1}^{n+k}R_{n-k-i}R_{n+k-j}\widetilde{R}_{i+j}.	
	\end{equation*}
\end{identity}

Finally, we give an identity about the product of two arbitrary terms of the sequence $\{R_n\}$.

\begin{identity}\label{id:RRRR}
	If $n, m\geq1$, then
	$$R_{n}R_{m}  = \sum_{i=0}^{n-1}\sum_{j=0}^{m-1} R_{i}R_{j}\widetilde{R}_{n-i}\widetilde{R}_{m-j}.$$
\end{identity}
\begin{proof}
Consider a $\big(2\times (n+m)\big)$-board as a concatenation of  $(2\times n)$-board and $(2\times m)$-board.	The result is derived in a direct manner from the number of the breakable tilings in position $n$. See Figure~\ref{fig:identity_RRRR}.
\end{proof}
\begin{figure}[!h]
	\centering
	\includegraphics[scale=0.9]{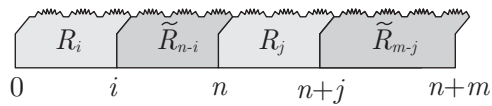}
	\caption{Tilings in case of Identity \ref{id:RRRR}}
	\label{fig:identity_RRRR}
\end{figure}

\section{Conclusion and future work}

In this article, we introduced a generalization of the square boards on the hyperbolic regular square mosaics and examined the combinatorial properties of tilings on these mosaics with colored squares and dominoes. As there are the infinite number of regular mosaics in the hyperbolic plane we hope that the examinations of the combinatorial properties of other tilings give some useful results. Moreover, we are informed on two additional timely articles about hyperbolic space tilings \cite{Lucis,Prok}.

\end{document}